%
%
%
%
%
%
\RequirePackage{fix-cm}
\documentclass[smallextended]{svjour3}       
\smartqed  
\usepackage{graphicx}
%
%
%
%
%

\usepackage[utf8]{inputenc}

\usepackage{amssymb,amsmath,array}
\usepackage{color}

\newcommand{\NN}{\mathbb{N}}
\newcommand{\RR}{\mathbb{R}}

\renewcommand{\AA}{\mathfrak{A}}
\newcommand{\A}{\mathcal{A}}
\newcommand{\B}{\mathcal{B}}
\newcommand{\C}{\mathcal{C}}
\newcommand{\D}{\mathcal{D}}
\newcommand{\BB}{\mathbf{B}}
\newcommand{\NNN}{\mathbf{N}}
\newcommand{\E}{\mathcal{E}}
\newcommand{\F}{\mathcal{F}}
\newcommand{\G}{\mathcal{G}}
\newcommand{\U}{\mathcal{U}}
\renewcommand{\S}{\mathcal{S}}
\newcommand{\1}{\mathbf{1}}

\newcommand{\dd}{\mathrm{d}}
\newcommand{\core}{\mathbf{core}}
\newcommand{\conv}{\mathbf{conv}}
\newcommand{\atoms}{\mathbf{atoms}}
\newcommand{\algebra}{\mathbf{algebra}}
\newcommand{\chains}{\mathbf{chains}}

\newcommand{\red}{\textcolor{black}}

\renewcommand{\wp}{\mathcal{P}}

\begin{document}

\title{How to assess coherent beliefs: A comparison of different notions of coherence in Dempster-Shafer theory of evidence\thanks{Accepted for publication in the volume {\it Reflections on the Foundations of Probability and Statistics: Essays in Honor of Teddy Seidenfeld}. T. Augustin, F. Cozman, and G. Wheeler (Eds.). {\it Theory and Decision Library A}. Springer.
}
}

\titlerunning{How to assess coherent beliefs}        

\author{Davide Petturiti         \and
        Barbara Vantaggi 
}


\institute{D. Petturiti \at
              Dept. Economics, University of Perugia\\
              \email{davide.petturiti@unipg.it}           
           \and
           B. Vantaggi \at
              Dept. MEMOTEF, La Sapienza University of Rome\\
              \email{barbara.vantaggi@uniroma1.it}
}

\date{Received: date / Accepted: date}

\maketitle

\begin{abstract}
Stemming from de~Finetti's work on finitely additive coherent probabilities, the paradigm of coherence has been applied to many uncertainty calculi in order to remove structural restrictions on the domain of the assessment. Three possible approaches to coherence are available: coherence as a consistency notion, coherence as fair betting scheme, and coherence in terms of penalty criterion. Due to its intimate connection with (finitely additive) probability theory, Dempster-Shafer theory allows notions of coherence in all the forms recalled above, presenting evident similarities with probability theory. In this chapter we \red{present} a systematic study of such coherence notions showing their equivalence.
\keywords{Coherence \and Belief function \and Betting scheme \and Penalty criterion}
\end{abstract}

\section{Introduction}
De Finetti's notion of coherence for probability assessments \cite{definetti} is well-known for its role in ruling probabilistic evaluations on a family of events without any particular algebraic structure (e.g., an algebra or a partition). Indeed, when a probability assessment is defined on an arbitrary set of events the axioms of (finitely additive) probability are no more sufficient and must be replaced by a suitable stronger condition.

A first definition of probabilistic coherence is that requiring the existence of a finitely additive probability measure on an algebra, extending the given assessment. Such definition of coherence is a consistency notion and is merely ``syntactic''. Two other popular definitions of coherence for probabilities \cite{definetti}, are that based on the betting scheme and that based on the penalty criterion. These two notions of coherence, mainly studied in decision theory, artificial intelligence and economics, provide also a ``semantic'' interpretation of the corresponding conditions. 

Coherence in terms of the penalty criterion has been extensively investigated by Teddy Seidenfeld (and colleagues) to whom we pay homage in this book in occasion of his 70th birthday (see, e.g., \cite{ssk-scoring,seidenfeld}).
In particular, in \cite{ssk-scoring} the authors carry out a systematic study of scoring rules for conditional probability assessments. Such paper investigates the connections with the betting criterion and also deals with scoring rules weakening the usual requirements of continuity and strict properness (see, e.g., \cite{scoring-rules}).

A key feature of probabilistic coherence is the possibility
of extending a coherent assessment to a new set of events by preserving coherence. In general, we have a class of coherent extensions, determining lower and upper envelopes that can be taken as non-additive measures (see, e.g., \cite{cs-libro,cs-bel,csv-is,pv-jmaa2018,walley-lp,walley-libro}). Hence, the notion of probabilistic coherence acts like a ``bridge'' between finitely additive probability theory and other uncertainty calculi. For instance, in some particular cases, due to specific
logical conditions, these envelopes
can be belief and plausibility functions \cite{cpv-is2016,csv-is,halpern} or necessity and possibility measures \cite{cpv-fss2016,dubprad-when}. A study in the case of countably additive measures has been carried out in \cite{krat}.

On the other hand, the paradigm of coherence has been extended substituting the class of finitely additive probability measures with another class of more general uncertainty measures (see, e.g., \cite{walley-lp,walley-libro,williams}). 
In particular, in \cite{ssk-imprecise} Seidenfeld (and colleagues) analyze scoring rules in the more general setting of imprecise probabilities.
Following this line, in this work we focus on Dempster-Shafer theory \cite{dempster,shafer} which is known to possess many important connections with finitely additive probability theory. We recall that the interplay between Dempster-Shafer theory and probability theory has remarkable consequences also in mathematical statistics \cite{cpv-sma,nguyen,ws-jspi,wasserman}.

We show that Dempster-Shafer theory allows notions of coherence in the three forms recalled for probabilities, i.e., coherence as a consistency notion, coherence as fair betting scheme, and coherence in terms of penalty criterion. In particular, the betting scheme notion is the quantitative counterpart of the qualitative notions introduced for preferences either on generalized lotteries \cite{cpv-kybernetika} or on gambles \cite{cpv-anor2019,cpv-ijar2019}.

All the definitions of coherence we introduce consider {\it partially resolving uncertainty}, as motivated by Jaffray in \cite{Jaffray-Bel}. More precisely, we assume that at the moment uncertainty is resolved, due to a lack of information, the agent can observe that an event $B \neq \emptyset$ has occurred, but he/she may not be able to identify the true state of the world. In detail, this translates in considering the gain (betting criterion) or the total penalty (penalty criterion) as a function defined on all non-impossible events, rather than a function defined on $\Omega$ (or on a partition of it). The latter situation is typical of the usual assumption of {\it completely resolving uncertainty}.  Further, we take for granted that the agent is systematically pessimistic as he/she always considers minima of indicators of events in computing such quantities.

We prove that all the introduced notions of coherence for belief assessments are equivalent and show that they reduce to the classical notions of coherence for probabilities when {\it completely resolving uncertainty} is assumed. Moreover, we specialize the introduced notions of coherence to work inside the subclass of finitely minitive necessity measures \cite{dubprad,shafer}, by assuming {\it consonance}, besides partially resolving uncertainty. Finally, the relation between coherence for belief functions and (strictly) proper scoring rules is studied and we show how the Bregman divergence \cite{cz-bregman,gs-sanfilippo,scoring-rules} determined by a bounded proper scoring rule can be used to correct an incoherent assessment.

The concept of coherence is of paramount importance for what concerns elicitation, interpretation and inference in an uncertainty framework, as shown by the theory of coherent lower previsions started by Williams \cite{williams} and Walley \cite{walley-lp,walley-libro}.
The purpose of the present chapter is twofold.
First, it provides a systematic treatment of coherence for partial assessments in the class of belief functions and in the two distinguished subclasses given by finitely additive probability measures and finitely minitive necessity measures. Second, by generalizing the classical betting and penalty criteria it gives operational tools for the elicitation and interpretation of belief and necessity assessments.

The chapter is structured as follows. In Section~\ref{sec:preliminaries} we collect the necessary preliminaries on belief functions. In Section~\ref{sec:coherence} we introduce the different notions of coherence for belief functions and prove their equivalence, while in Section~\ref{sec:specialcases} we investigate the notions of coherence for probability and necessity assessments. Finally, in Section~\ref{sec:properscoring} we present coherence conditions based on proper scoring rules and show their use in correcting an incoherent assessment.

\section{Preliminaries}
\label{sec:preliminaries}
Let $\Omega$ be an arbitrary non-empty set of {\it states of the world} and $\A \subseteq \wp(\Omega)$ an algebra of its subsets representing {\it events}, where $\wp(\Omega)$ stands for the power set of $\Omega$. In case of a finite algebra $\A$, let $\atoms(\A) = \{C_1,\ldots,C_m\}$ be the set of its {\it atoms}, that is the finest partition of $\Omega$ contained in $\A$. Moreover, for every $A \in \A$ we denote by $\1_A:\Omega \to \{0,1\}$ its {\it indicator}. Further, if $\E \subseteq \wp(\Omega)$ is an arbitrary non-empty family, then $\algebra(\E)$ denotes the minimal algebra of subsets of $\Omega$ containing $\E$.

A {\it belief function} (see \cite{dempster,shafer}) is a mapping $Bel:\A \to [0,1]$ such that:
\begin{description}
\item[\it (i)] $Bel(\emptyset) = 0$ and $Bel(\Omega) = 1$;
\item[\it (ii)] for every $k\geq 2$ and for every $A_1,\ldots,A_k \in \A$,
$$
Bel \left(\bigcup_{i=1}^k A_i\right) \geq \sum_{\emptyset \neq I \subseteq \{1,\ldots,k\}} (-1)^{|I| + 1} Bel\left(\bigcap_{i \in I} A_i\right).
$$
\end{description}

Condition {\it (ii)} above is usually termed {\it complete monotonicity}, moreover, together with condition {\it (i)} it implies {\it monotonicity} with respect to set inclusion, that is, $Bel(A) \le Bel(B)$ whenever $A \subseteq B$, for $A,B \in \A$. In short, in this chapter for belief function we mean a {\it normalized completely monotone capacity} \cite{choquet}.

Belief functions are intimately connected to finitely additive probabilities, in at least four aspects:
\begin{enumerate}
\item Every finitely additive probability measure $P$ on $\A$ is a belief function.
\item Every finitely additive probability measure $P$ on $\A$ determines a belief function on every super-algebra $\B$, with $\A \subset \B \subseteq \wp(\Omega)$, through its inner measure $P_*$ defined, for every $B \in \B$, as
$$
P_*(B) = \sup\{P(A)\,:\, A \subseteq B, A \in \A\}.
$$
More generally, it holds that the inner measure $Bel_*$ on $\B$ generated by a belief function $Bel$ on $\A$ is still a belief function \cite{deCooman-book}.
\item Every multi-valued mapping $f$ (satisfying some natural requirements) from a finitely additive probability space $(\Theta,\F,P)$ to $(\Omega,\A)$, with $\F$ an algebra on $\Theta$, induces a belief function $Bel$ on $\A$ \cite{pv-jmaa2018}.
\item Every belief function $Bel$ on $\A$ induces a closed (in the product topology) convex set of finitely additive probability measures on $\A$ referred to as {\it core}:
$$
\core(Bel) = \{P\,:\, \mbox{$P$ is a f.a. probability on $\A$, $Bel \le P$}\}.
$$
It actually holds that $Bel = \min \core(Bel)$, where the minimum is pointwise on the elements of $\A$ \cite{schmeidler2}.
\end{enumerate}

A fifth connection with finitely additive probabilities deserves a particular attention as it allows an integral representation of any belief function. In the case of a finite $\A$, a belief function $Bel$ on $\A$ is in bijection with its {\it M\"obius inverse} \cite{cj-2mon,grabisch},
defined for every $A \in \A$ as
$$
m_{Bel}(A) = \sum_{\substack{B \subseteq A\\ B\in\A}} (-1)^{|\{C_r \in \atoms(\A) \,:\, C_r \subseteq A \setminus B\}|} Bel(B),
$$
also called {\it basic probability assignment}.
The function $m_{Bel}: \A \to [0,1]$ is such that  $m_{Bel}(\emptyset)=0$, $\sum_{A\in\A}m_{Bel}(A)=1$,
and, for every $A \in \A$,
$$
Bel(A) = \sum_{\substack{B\subseteq A\\ B\in\A}}m_{Bel}(B).
$$
The elements of $\A$ where $m_{Bel}$ is positive are said {\it focal elements} \cite{shafer} and a $Bel$ is a probability measure if and only if its focal elements belong to $\atoms(\A)$.

Notice that, disregarding $m_{Bel}(\emptyset) = 0$, $m_{Bel}$ can be formally viewed as a probability distribution on $\U = \A \setminus \{\emptyset\}$ inducing a probability measure $\mu_{Bel}$ on $\wp(\U)$, setting $\mu_{Bel}(\{B\}) = m_{Bel}(B)$ for every $B \in \U$, and extending $\mu_{Bel}$ by additivity. This implies that, in the case of a finite $\A$, there is a bijection between the class of belief functions on $\A$ and the class of probability measures on $\wp(\U)$ setting, for every $A \in \A$
$$
Bel(A) =
\sum_{B \in \U} \left(\min_{\omega \in B}\1_A(\omega)\right) \mu_{Bel}(\{B\}).
$$

Such relation has been generalized to an arbitrary $\Omega$ and an arbitrary algebra $\A$ contained in $\wp(\Omega)$ in \cite{gs-mobius,m-mobius}. Theorem~A in \cite{gs-mobius} shows that every belief function $Bel$ on $\A$ is in bijection with a finitely additive probability measure $\mu_{Bel}$ defined on an algebra $\AA$ possibly strictly contained in $\wp(\U)$, where we still denote $\U = \A \setminus \{\emptyset\}$. The algebra $\AA$ is built as follows. For every $A \in \A$, define $\tilde{A} \in \wp(\U)$ setting
$$
\tilde{A} = \{B \in \U \,:\, B \subseteq A\},
$$
and let $\AA = \algebra(\{\tilde{A} \,:\, A \in \U\})$.

Hence, we get that every belief function $Bel$ on $\A$ is in bijection with a finitely additive probability $\mu_{Bel}$ on $\AA$ such that, for every $A \in \A$
$$
Bel(A) = \int_\U \left(\min_{\omega \in B}\1_A(\omega)\right) \mu_{Bel}(\dd B),
$$
where the integral is of Stieltjes type \cite{bhaskara}.
The finitely additive probability $\mu_{Bel}$ on $\AA$ can still be called the {\it M\"obius inverse} of $Bel$. Notice that, when $\A$ is finite it holds $\AA = \wp(\U)$, thus we recover the classical definition.

\section{Notions of coherence for belief functions}
\label{sec:coherence}
The previous section highlights that conditions {\it (i)} and {\it (ii)} characterizing a belief function essentially rely on the fact that the domain is an algebra. Nevertheless, in decision problems we often need to manage partial numerical evaluations of uncertainty. In such cases conditions {\it (i)} and {\it (ii)} are no more sufficient to guarantee that the assessed values are consistent with a belief function, therefore we need a stronger requirement encoded in a suitable notion of coherence.

In this section, $\Omega$ still denotes an arbitrary non-empty set of states of the world, while $\E \subseteq \wp(\Omega)$ is an arbitrary non-empty family of events. For any non-empty subset $\G \subseteq \E$, we denote $\A_\G = \algebra(\G)$ and $\U_\G = \A_\G \setminus \{\emptyset\}$. Further, we let $\AA_\G = \algebra(\{\tilde{A} \,:\, A \in \U_\G\})$, where $
\tilde{A} = \{B \in \U_\G \,:\, B \subseteq A\}$, for every $A \in \A_\G$.

A {\it belief assessment} is then a function $Bel:\E \to [0,1]$.

In the following we propose different notions to characterize coherence of a belief assessment, seen as a subjective quantification of uncertainty.

As is well-known, the paradigm of coherence is originally due to de~Finetti \cite{definetti} in the context of finitely additive probability measures. Following this idea the paradigm of coherence has been extended also to other uncertainty measures (see, e.g., \cite{cs-libro,walley-lp,walley-libro,williams}). In particular in the context of belief functions we recall \cite{cv2008,Jaffray-Bel}.

As motivated in \cite{Jaffray-Bel}, in situations depending, for instance, on contractual clauses, the occurrence of some events could remain undetermined to the observer, because of a lack of information, at the dates at which the clauses take effect. In other terms, in such situations, that we qualify as {\it partially resolving uncertainty}, we can acquire the information that an event $B \neq \emptyset$ has occurred but we may not be able to identify the true state of the world.
For this reason, in what follows, given an event $E$, to every event $B \neq \emptyset$ we associate the quantity
$$
\min\limits_{\omega \in B} \1_E(\omega) =
\left\{
\begin{array}{ll}
1, & \mbox{if $B \subseteq E$},\\
0, & \mbox{otherwise},
\end{array}
\right.
$$
that acts as a generalized indicator, where uncertainty is resolved pessimistically. Notice that, if we consider an algebra $\A \subseteq \wp(\Omega)$, for every $B \in \A \setminus \{\emptyset\}$, the function $u_{B}:\A \to \{0,1\}$ defined, for every $E \in \A$, as
$$
u_B(E) =
\min\limits_{\omega \in B} \1_E(\omega),
$$
is said {\it vacuous belief function} \cite{deCooman-book} or {\it unanimity game} \cite{grabisch} on $B$.

The first two notions of coherence we introduce are consistency notions that assure the extendibility of a belief assessment to a belief function defined on an algebra. In particular, the second notion of coherence profits from the integral representation of a belief function through a finitely additive probability. Both conditions look for a global extension and do not provide any operational tool for their verification.

\begin{definition}
A belief assessment $Bel:\E \to [0,1]$ is {\bf coherent-1} if there exists a belief function $Bel':\A_\E \to [0,1]$ such that, for every $E \in \E$, it holds
$$
Bel(E) = Bel'(E).
$$
\end{definition}

\begin{definition}
A belief assessment $Bel:\E \to [0,1]$ is {\bf coherent-2} if there exists a finitely additive probability $\mu:\AA_\E \to [0,1]$ such that, for every $E \in \E$, it holds
$$
Bel(E) =  \int_{\U_\E} \left(\min_{\omega \in B}\1_E(\omega)\right) \mu(\dd B).
$$
\end{definition}

Results on the extendibility of a belief function to a larger domain can be found in \cite{walley-lp}, where $\E$ is assumed to be an algebra, and \cite{dCtm}, where $\E$ is assumed to be a lattice with respect to intersection and union. Our approach remains, nevertheless, more general.

The following condition is still a consistency notion that, contrary to the previous two conditions, has an operational appeal as it requires to solve a linear system for every finite subfamily of events.

\begin{definition}
A belief assessment $Bel:\E \to [0,1]$ is {\bf coherent-3} if, for every $n \in \NN$ and every $\F = \{E_1,\ldots,E_n\} \subseteq \E$, the following linear system with unknowns $x_B$, for every $B \in \U_\F$, is compatible
$$
\S_\F:
\left\{
\begin{array}{ll}
\displaystyle{\sum_{B \in \U_\F} \left(\min_{\omega \in B}\1_{E_i}(\omega)\right) x_B = Bel(E_i)}, & \mbox{for $i=1,\ldots,n$},\\[1ex]
\displaystyle{\sum_{B \in \U_\F} x_B = 1},\\
x_B \ge 0, &\mbox{for every $B \in \U_\F$}.
\end{array}
\right.
$$
\end{definition}

The fourth notion we introduce has a betting scheme interpretation, analogous to that proposed by de~Finetti for probabilities \cite{definetti}, but working under partially resolving uncertainty. Given a finite subfamily $\F = \{E_1,\ldots,E_n\} \subseteq \E$, a {\it bet} on $E_i$ with stake $\lambda_i \in \RR$ produces a gain defined, for every $B \in \U_\F$, as
$$
\lambda_i\left(\left(\min_{\omega \in B} \1_{E_i}(\omega)\right) - Bel(E_i)\right),
$$
where $Bel(E_i)$ can be interpreted as the amount paid to participate to the bet. The bets on events in $\F$ can be combined giving rise to the gain defined, for every $B \in \U_\F$, as
$$
G_\F(B) = \sum_{i = 1}^n \lambda_i\left(\left(\min_{\omega \in B} \1_{E_i}(\omega)\right) - Bel(E_i)\right).
$$
Notice that the gain is defined on $\U_\F$, whose elements express the possible partial information we may acquire when uncertainty is resolved. The combination of bets is coherent if the gain $G_\F$ is not uniformly negative over $\U_\F$, otherwise we will have a so-called {\it Dutch book}. Of course, the notion of Dutch book appearing here is a generalization of that introduced for probabilities by de~Finetti \cite{definetti}, where the gain $G_\F$ is restricted only to the elements of $\atoms(\algebra(\F))$ which are a proper subset of $\U_\F$ expressing completely resolving uncertainty.

\begin{definition}
A belief assessment $Bel:\E \to [0,1]$ is {\bf coherent-4} if, for every $n \in \NN$ and every $\F = \{E_1,\ldots,E_n\} \subseteq \E$, for every $\lambda_1,\ldots,\lambda_n \in \RR$, the function $G_\F:\U_\F \to \RR$ defined, for every $B \in \U_\F$, as
$$
G_\F(B) = \sum_{i = 1}^n \lambda_i\left(\left(\min_{\omega \in B} \1_{E_i}(\omega)\right) - Bel(E_i)\right),
$$
is such that $
\displaystyle{\max_{B \in \U_\F} G_\F(B)\ge 0}$.
\end{definition}

We point out the resemblance of condition coherence-4 with the {avoiding sure loss condition} for a lower probability given by Walley \cite{walley-lp,walley-libro} (see also \cite{deCooman-book}). It is important to notice that Walley's condition deals with a gain function defined on $\Omega$ while in our case, working under partially resolving uncertainty, the gain function is defined on $\U_\F$.


In analogy with the classical betting scheme due to de~Finetti, condition coherence-4 can be subject to a strategic aspect, i.e., an agent could try to guess the choice of the ``opponent'' and modify his belief assessment accordingly \cite{definetti,ssk-scoring}.

The last notion of coherence we propose is the generalization 
of the penalty criterion proposed in \cite{definetti}. Given a finite subfamily $\F = \{E_1,\ldots,E_n\} \subseteq \E$, the assessed $Bel(E_i)$ on $E_i$ causes to the agent a {\it penalty} defined, for every $B \in \U_\F$, as
$$
\left(\left(\min_{\omega \in B} \1_{E_i}(\omega)\right) - Bel(E_i)\right)^2.
$$
Hence, the restriction of the assessment $Bel$ to $\F$ will result in a global penalty defined, for every $B \in \U_\F$, as
$$
L_\F(B) = \sum_{i = 1}^n \left(\left(\min_{\omega \in B} \1_{E_i}(\omega)\right) - Bel(E_i)\right)^2.
$$
Notice that the global penalty is defined on $\U_\F$, whose elements express the possible partial information we may acquire when uncertainty is resolved. The assessment is coherent if there is no distinct assessment $Bel^*:\F \to [0,1]$ such that the corresponding global penalty $L_\F^*$ is uniformly lower than $L_\F$ over $\U_\F$.

\begin{definition}
A belief assessment $Bel:\E \to [0,1]$ is {\bf coherent-5} if, for every $n \in \NN$ and every $\F = \{E_1,\ldots,E_n\} \subseteq \E$, there is no distinct assessment $Bel^*:\F \to [0,1]$ such that the functions $L_\F^*,L_\F:\U_\F \to \RR$ defined, for every $B \in \U_\F$, as
\begin{eqnarray*}
L_\F^*(B) &=& \sum_{i = 1}^n \left(\left(\min_{\omega \in B} \1_{E_i}(\omega)\right) - Bel^*(E_i)\right)^2,\\
L_\F(B) &=& \sum_{i = 1}^n \left(\left(\min_{\omega \in B} \1_{E_i}(\omega)\right) - Bel(E_i)\right)^2,
\end{eqnarray*}
satisfy, for every $B \in \U_\F$, $L_\F^*(B) < L_\F(B)$.
\end{definition}

In analogy to what has been proven in the case of probability assessments (see \cite{gilio} and \cite{scoring-rules}) condition coherence-5 can be equivalently formulated by requiring the non-existence of a distinct belief assessment $Bel^* : \F \to [0,1]$ such that, for every $B \in \U_\F$, $L_\F^*(B) \le L_\F(B)$.

In the particular case $\E$ is a finite set, conditions coherence-3, coherence-4 and coherence-5 introduced in the previous definitions can be simplified by requiring them to hold only for $\F = \E$. Indeed, if they hold for $\F = \E$, then it is easily proved that they hold also for every non-empty subset $\F \subseteq \E$.

Limiting to a finite $\Omega$ and $\E = \wp(\Omega)$, condition coherence-4 has been introduced in \cite{Jaffray-Bel}, while the case of a finite $\Omega$ and an arbitrary $\E$ has been considered in \cite{Regoli1994}.
Again, limiting to a finite $\Omega$ and $\E = \wp(\Omega)$, it is easy to show that our condition coherence-4 can be expressed in terms of the notion of $B$-consistency for a coherent betting function $R:\RR^\Omega \to \{0,1\}$ introduced in \cite{KerkMeest}.
Nevertheless, our condition coherence-4 is stronger than all conditions proposed in \cite{Jaffray-Bel,KerkMeest,Regoli1994} since no assumption is made neither on $\Omega$ nor on $\E$.
We point out that a dual version of conditions coherence-1 and coherence-3 can be traced back to \cite{ccv-kyb2014}, where a more general condition dealing with a conditional plausibility assessment is considered. This is due to duality between belief and plausibility functions (see, e.g., \cite{grabisch,shafer}) and the identification of $E_i$ with $E_i|\Omega$. Morover, the dual of condition coherence-4 turns out to be a particular case of a more general condition of coherence we introduced in \cite{bumi} for a conditional completely alternating Choquet expectation assessment. 
As far as we know, conditions coherence-2 and coherence-5 are new in the literature.

The following theorem shows that all notions of coherence introduced so far are equivalent.

\begin{theorem}
\label{th:cohe-bel}
For a belief assessment $Bel:\E \to [0,1]$ the following statements are equivalent:
\begin{description}
\item[\it (i)] $Bel$ is coherent-1;
\item[\it (ii)] $Bel$ is coherent-2;
\item[\it (iii)] $Bel$ is coherent-3;
\item[\it (iv)] $Bel$ is coherent-4;
\item[\it (v)] $Bel$ is coherent-5.
\end{description}
\end{theorem}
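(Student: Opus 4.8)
The plan is to route all five conditions through one finite geometric picture together with a single global extension argument. Fix a finite subfamily $\F = \{E_1,\ldots,E_n\} \subseteq \E$; since $\F$ is finite, $\A_\F = \algebra(\F)$ and hence $\U_\F$ are finite (and $\AA_\F = \wp(\U_\F)$). Associate to the assessment the point $b = (Bel(E_1),\ldots,Bel(E_n)) \in \RR^n$ and, to each $B \in \U_\F$, the vertex $p_B = \left(\min_{\omega \in B}\1_{E_1}(\omega),\ldots,\min_{\omega \in B}\1_{E_n}(\omega)\right) \in \{0,1\}^n$. Writing $C = \conv\{p_B \,:\, B \in \U_\F\}$, the compatibility of $\S_\F$ is, by definition, exactly the condition $b \in C$: a nonnegative solution $(x_B)$ summing to $1$ is precisely a representation of $b$ as a convex combination of the $p_B$.

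First I would dispatch (i)~$\Leftrightarrow$~(ii) directly from the integral representation recalled in Section~\ref{sec:preliminaries}: a belief function $Bel'$ on $\A_\E$ is in bijection with a finitely additive probability $\mu_{Bel'}$ on $\AA_\E$ through $Bel'(A) = \int_{\U_\E}(\min_{\omega \in B}\1_A(\omega))\,\mu_{Bel'}(\dd B)$, so $Bel'$ extends $Bel$ if and only if $\mu_{Bel'}$ realizes the integral formula of coherence-2. Next, for (i)/(ii)~$\Rightarrow$~(iii), I would restrict any extending belief function $Bel'$ to the finite subalgebra $\A_\F$; the restriction is again a (now finite) belief function, so its M\"obius inverse $m$ on $\A_\F$ is nonnegative, sums to $1$, and satisfies $Bel(E_i) = \sum_{B \subseteq E_i}m(B) = \sum_{B \in \U_\F}(\min_{\omega \in B}\1_{E_i}(\omega))m(B)$, whence $(x_B) = (m(B))$ solves $\S_\F$, i.e.\ $b \in C$.

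The converse (iii)~$\Rightarrow$~(i) is the step I expect to be the main obstacle, as it must manufacture a single global belief function out of the compatibility of all finite systems. I would argue by compactness: in the product space $[0,1]^{\A_\E}$, which is compact by Tychonoff, consider for each finite $\F \subseteq \E$ the set $K_\F$ of those $\phi$ whose restriction to $\A_\F$ is a belief function agreeing with $Bel$ on $\F$. Each $K_\F$ is closed, because complete monotonicity, normalization and the finitely many equalities $\phi(E_i) = Bel(E_i)$ are closed conditions; each is nonempty by coherence-3 (turn a solution of $\S_\F$ into a belief function on $\A_\F$ via its M\"obius sum and extend arbitrarily); and the family has the finite intersection property, since for finitely many $\F_1,\ldots,\F_k$ any $\phi \in K_{\F_1 \cup \cdots \cup \F_k}$ lies in every $K_{\F_j}$ by restriction. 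A point $\phi \in \bigcap_\F K_\F$ is then a belief function on all of $\A_\E$ extending $Bel$: any instance of complete monotonicity involves finitely many events of $\A_\E$, hence lies in some $\A_\F$, where $\phi$ is already a belief function.

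Finally, I would close the equivalences on the finite side through the common characterization $b \in C$. For (iii)~$\Leftrightarrow$~(iv) I would invoke the separating hyperplane theorem for the point $b$ and the compact polytope $C$: $b \notin C$ holds if and only if some $\lambda \in \RR^n$ strictly separates $b$ from all $p_B$, i.e.\ $G_\F(B) = \sum_i \lambda_i(\min_{\omega \in B}\1_{E_i}(\omega) - Bel(E_i)) < 0$ for every $B$, which is exactly a Dutch book; thus $b \in C$ is equivalent to $\max_{B} G_\F(B) \ge 0$ for all $\lambda$. For (iii)~$\Leftrightarrow$~(v) I would use Euclidean projection, noting $L_\F(B) = \|p_B - b\|^2$ and $L_\F^*(B) = \|p_B - b^*\|^2$: if $b \notin C$, its projection $b^*$ onto $C$ lies in $C \subseteq [0,1]^n$, and the obtuse-angle inequality $\langle p_B - b^*, b - b^*\rangle \le 0$ gives $\|p_B - b\|^2 \ge \|p_B - b^*\|^2 + \|b - b^*\|^2 > \|p_B - b^*\|^2$ for every $B$, a dominating assessment; conversely, if $b = \sum_B x_B p_B \in C$, the variance decomposition $\sum_B x_B \|p_B - b^*\|^2 = \sum_B x_B \|p_B - b\|^2 + \|b - b^*\|^2$ forbids any $b^* \ne b$ from lowering the penalty at every $B$. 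Since coherence-3, coherence-4 and coherence-5 each quantify over all finite $\F \subseteq \E$, these per-$\F$ equivalences yield (iii)~$\Leftrightarrow$~(iv)~$\Leftrightarrow$~(v), which together with (i)~$\Leftrightarrow$~(ii)~$\Leftrightarrow$~(iii) completes the chain.
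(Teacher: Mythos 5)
Your proposal is correct and follows essentially the same route as the paper: the identical chain of \emph{(i)}~$\Leftrightarrow$~\emph{(ii)} via the M\"obius/integral bijection, \emph{(i)}~$\Leftrightarrow$~\emph{(iii)} via M\"obius inverses plus a Tychonoff/finite-intersection-property compactness argument on $[0,1]^{\A_\E}$, and the common geometric reduction of \emph{(iii)}, \emph{(iv)}, \emph{(v)} to $b \in \conv\{p_B \,:\, B \in \U_\F\}$, with \emph{(iv)} handled by separation and \emph{(v)} by projection plus your ``variance'' decomposition, which is exactly the paper's moment-of-inertia computation. The only cosmetic differences are that the paper invokes Farkas' lemma where you use the separating hyperplane theorem, and cites Proposition~3 of its scoring-rules reference where you prove the obtuse-angle/Pythagorean inequality directly.
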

\begin{proof}
{\it (i)} $\Longleftrightarrow$ {\it (ii)} The equivalence follows by Theorem~A in \cite{gs-mobius}.

{\it (i)} $\Longrightarrow$ {\it (iii)} If $Bel':\A_\E \to [0,1]$ is a belief function extending $Bel$, then, for every $n \in \NN$ and every $\F = \{E_1,\ldots,E_n\} \subseteq \E$, its restriction to $\A_\F$ is a belief function $Bel''$ having M\"obius inverse $m_{Bel''}$. Setting $x_B = m_{Bel''}(B)$, for every $B \in \U_\F$, we get a solution of system $\S_\F$.

{\it (iii)} $\Longrightarrow$ {\it (i)} Suppose, for every $n \in \NN$ and every $\F = \{E_1,\ldots,E_n\} \subseteq \E$, the system $\S_\F$ is compatible. Then, if ${\bf x}$ is a solution with component $x_B$, for every $B \in \U_\F$, setting $m_{Bel''}(\emptyset) = 0$ and $m_{Bel''}(B) = x_B$, for every $B \in \U_\F$, we get the M\"obius inverse of a belief function $Bel''$ extending $Bel_{|\F}$ on the whole $\A_\F$. Notice that every belief function $Bel''$ on $\A_\F$ extending $Bel_{|\F}$ is obtained through a solution of system $\S_\F$. Denote by $\BB_\F$ the set of mappings from $\A_\E$ to $[0,1]$ whose restriction to $\A_\F$ is a belief function extending $Bel_{|\F}$, where the restriction is determined by a solution of $\S_\F$. The set $\BB_\F$ is a non-empty \red{closed} subset of the \red{compact} set $[0,1]^{\A_\E}$ endowed with the product topology. It is easily seen that the family
$$
\{\BB_\F \,:\, \F = \{E_1,\ldots,E_n\} \subseteq \E,n \in \NN\},
$$
possesses the finite intersection property, thus it holds
$$
\bigcap\{\BB_\F \,:\, \F = \{E_1,\ldots,E_n\} \subseteq \E,n \in \NN\} \neq \emptyset
$$
and so there exists $Bel' \in \bigcap\{\BB_\F \,:\, \F = \{E_1,\ldots,E_n\} \subseteq \E,n \in \NN\}$ which is a belief function on $\A_\E$ extending $Bel$.

{\it (iii)} $\Longleftrightarrow$ {\it (iv)} The proof can be traced back to \cite{Regoli1994} in case of a finite $\E$: we provide the proof for an arbitrary $\E$ for the sake of completeness. For $n \in \NN$ and $\F = \{E_1,\ldots,E_n\} \subseteq \E$, fix an enumeration of $\U_\F = \{A_1,\ldots,A_{2^m - 1}\}$, where $m$ is the cardinality of $\atoms(\A_\F)$. Then system $\S_\F$ can be written in matrix form as
$$
\S_\F:
\left\{
\begin{array}{ll}
A{\bf x} = {\bf b},\\
{\bf x} \ge {\bf 0},
\end{array}
\right.
$$
where ${\bf x} = (x_1,\ldots,x_{2^m-1})^T \in \RR^{(2^m - 1) \times 1}$ is an unknown column vector, $A = (a_{ij}) \in \RR^{(n+1) \times (2^m - 1)}$ is the coefficient matrix with
$$
\begin{array}{ll}
a_{ij} = \displaystyle{\min_{\omega \in A_j} \1_{E_i}(\omega)}, & \mbox{ for $i=1,\ldots,n$, $j = 1,\ldots,2^m-1$},\\
a_{(n+1)j} = 1, &\mbox{ for $j = 1,\ldots,2^m-1$},
\end{array}
$$
and ${\bf b} = (Bel(E_1),\ldots,Bel(E_n),1)^T \in \RR^{(n+1) \times 1}$.

By Farkas' lemma \cite{mangasarian}, system $\S_\F$ is compatible if and only if the following system $\S_\F^*$ is not compatible
$$
\S_\F^*:
\left\{
\begin{array}{ll}
{\bf y} A \le {\bf 0},\\
{\bf y}{\bf b} > 0,
\end{array}
\right.
$$
where ${\bf y} = (\lambda_1,\ldots,\lambda_n,y_{n+1}) \in \RR^{1 \times (n+1)}$ is an unknown row vector. It holds that ${\bf y} A \in \RR^{1 \times (2^m - 1)}$ and, for $j=1,\ldots,(2^m - 1)$, the $j$th column of constraint ${\bf y} A \le {\bf 0}$ is
$$
\sum_{i=1}^n \lambda_i\left(\min_{\omega \in A_j}\1_{E_i}(\omega)\right) + y_{n+1} \le 0,
$$
moreover, subtracting the positive quantity ${\bf y}{\bf b}$ we get
$$
\sum_{i=1}^n \lambda_i\left(\left(\min_{\omega \in A_j}\1_{E_i}(\omega)\right) - Bel(E_i)\right) < 0.
$$
Thus, condition {\it (iii)} is equivalent to the existence of a $j \in \{1,\ldots,2^m - 1\}$ such that the above inequality does not hold, which, in turn, is equivalent to {\it (iv)}.

{\it (iii)} $\Longleftrightarrow$ {\it (v)}
For $n \in \NN$ and $\F = \{E_1,\ldots,E_n\} \subseteq \E$, fix an enumeration of $\U_\F = \{A_1,\ldots,A_{2^m - 1}\}$, where $m$ is the cardinality of $\atoms(\A_\F)$, so system $\S_\F$ can be written in matrix form as in the previous point.

For $j = 1, \ldots, 2^m - 1$, define the column vector ${\bf e}^j = (e^j_1,\ldots,e^j_n)^T \in \RR^{n \times 1}$ setting
$$
e^j_i = \min_{\omega \in A_j} \1_{E_i}(\omega) =
\left\{
\begin{array}{ll}
1, & \mbox{if $A_j \subseteq E_i$,}\\
0, & \mbox{otherwise,}\\
\end{array}
\right.
$$
moreover, let ${\bf d} = (Bel(E_1),\ldots,Bel(E_n))^T \in \RR^{n \times 1}$. It holds that ${\bf x} \in \RR^{(2^m - 1) \times 1}$ is a solution of system $\S_\F$ if and only if ${\bf x} \ge {\bf 0}$, $\sum_{j=1}^{2^m-1} x_j = 1$ and
$$
{\bf d} = \sum_{j=1}^{2^m-1} x_j {\bf e}^j,
$$
that is ${\bf d}$ is the convex combination of vectors ${\bf e}^j$'s with weights given by ${\bf x}$. In other terms, condition {\it (iii)} is equivalent to the fact that the vector ${\bf d}$ belongs to the convex hull of vectors ${\bf e}^j$'s, that is ${\bf d} \in \conv(\{{\bf e}^1,\ldots,{\bf e}^{2^m-1}\})$.

For vectors ${\bf u} = (u_1,\ldots,u_n)^T,{\bf v}=(v_1,\ldots,v_n)^T \in \RR^{n \times 1}$, denote by
$$
d_E({\bf u},{\bf v}) = \sqrt{\sum_{i=1}^n (u_i - v_i)^2},
$$
their Euclidean distance.

If ${\bf x} \in \RR^{(2^m - 1) \times 1}$ has non-negative components summing up to 1, then let the moment of inertia $I_{\bf d}$ with respect to ${\bf d}$ of the vector of weights ${\bf x}$ be
$$
I_{\bf d} = \sum_{j=1}^{2^m-1}x_j d_E({{\bf e}^j},{\bf d})^2.
$$
Now, for a distinct $Bel^*:\F \to [0,1]$, let ${\bf d}^* = (Bel^*(E_1),\ldots,Bel^*(E_n))^T \in \RR^{n \times 1}$ and define analogously
$$
I_{{\bf d}^*} = \sum_{j=1}^{2^m-1}x_j d_E({{\bf e}^j},{\bf d}^*)^2.
$$
Notice that, for $j=1,\ldots,2^m-1$, $L_\F^*(A_j) = d_E({{\bf e}^j},{\bf d}^*)^2$ and $L_\F(A_j) = d_E({{\bf e}^j},{\bf d})^2$.

We first show that condition {\it (iii)} implies {\it (v)}.
If ${\bf x} \in \RR^{(2^m - 1) \times 1}$ is a solution of system $\S_\F$, then
$$
I_{{\bf d}^*} - I_{{\bf d}} = \sum_{j=1}^{2^m-1}x_j\left( d_E({{\bf e}^j},{\bf d}^*)^2 - d_E({{\bf e}^j},{\bf d})^2\right) = d_E({\bf d}^*,{\bf d})^2 > 0,
$$
so, for at least a $j \in \{1,\ldots,2^m-1\}$ it holds
$d_E({{\bf e}^j},{\bf d}^*)^2 > d_E({{\bf e}^j},{\bf d})^2$.
Since $Bel^*$ is arbitrary, if condition {\it (iii)} holds, then we cannot find an assessment $Bel^*$ such that $L_\F^*(A_j) < L_\F(A_j)$, for $j=1,\ldots,2^m-1$, so condition {\it (v)} holds.

Finally, we show that condition {\it (v)} does not hold if condition {\it (iii)} does not hold. Hence, assume there are $n \in \NN$ and $\F = \{E_1,\ldots,E_n\} \subseteq \E$ such that system $\S_\F$ is not solvable, that is ${\bf d} \notin \conv(\{{\bf e}^1,\ldots,{\bf e}^{2^m-1}\})$.

As follows from results in \cite{gs-sanfilippo},
the squared Euclidean distance coincides with the Bregman divergence determined by the Brier quadratic scoring rule, which is a bounded (strictly) proper scoring rule (see \cite{scoring-rules}). Hence, by Proposition~3 in \cite{scoring-rules}, since ${\bf d} \notin \conv(\{{\bf e}^1,\ldots,{\bf e}^{2^m-1}\})$, there exists a unique element of $\conv(\{{\bf e}^1,\ldots,{\bf e}^{2^m-1}\})$ minimizing the squared Euclidean distance with respect to ${\bf d}$, said projection of ${\bf d}$ onto $\conv(\{{\bf e}^1,\ldots,{\bf e}^{2^m-1}\})$.
Let ${\bf d}^* = (Bel^*(E_1),\ldots,Bel^*(E_n))^T$ be the projection of ${\bf d}$ onto $\conv(\{{\bf e}^1,\ldots,{\bf e}^{2^m-1}\})$, that is
$${\bf d}^* = \displaystyle{\arg\min_{{\bf u} \in \conv(\{{\bf e}^1,\ldots,{\bf e}^{2^m-1}\})} d_E({\bf u},{\bf d})^2}.
$$
By Proposition~3 in \cite{scoring-rules} it holds that, for $j=1,\ldots,2^m-1$, we have
$$
d_E({\bf e}^j,{\bf d}^*)^2 + d_E({\bf d}^*,{\bf d})^2 \le d_E({\bf e}^j,{\bf d})^2,
$$
moreover, since $d_E({\bf d}^*,{\bf d})^2 > 0$, it holds
$$
d_E({\bf e}^j,{\bf d}^*)^2 < d_E({\bf e}^j,{\bf d})^2.
$$
This implies that with such a $Bel^*$, $L_\F^*(A_j) < L_\F(A_j)$, for $j=1,\ldots,2^m-1$, so condition {\it (v)} does not hold.
\hfill$\square$
\end{proof}

By virtue of Theorem~\ref{th:cohe-bel}, we say that a belief assessment $Bel:\E \to [0,1]$ is {\it coherent} if one (and hence all) of the previous notions of coherence holds, otherwise it is said {\it incoherent}.

Referring to results appearing in \cite{walley-lp} and \cite{dCtm}, a coherent belief assessment $Bel$ on $\E$ can be extended (generally not in a unique way) to a belief function on $\wp(\Omega)$. In turn, such a belief function gives rise to a completely monotone lower expectation operator on the set of bounded real functions on $\Omega$, defined through the Choquet integral.

\section{Special cases}
\label{sec:specialcases}
\subsection{Finitely additive probability measures}
As already recalled in Section~\ref{sec:preliminaries}, finitely additive probability measures on an algebra $\A$ form a distinguished subclass of the class of belief functions on $\A$.

In general, if $\E$ is an arbitrary non-empty family of events and $P:\E\to[0,1]$ is a probability assessment, the notions of coherence introduced in Section~\ref{sec:coherence} can be specialized to work inside the subclass of finitely additive probabilities. The obtained conditions of coherence exactly coincide with those proposed by de~Finetti \cite{definetti}. In what follows, if $\F$ is a finite non-empty set of events, then $\C_\F$ denotes the set of {\it atoms} generated by them, i.e., $\C_\F = \atoms(\algebra(\F))$.

As we pointed out in the previous section, finitely additive probability theory can be considered as a special case of Dempster-Shafer theory. 
In practice, this has a direct impact on conditions coherence-3, coherence-4 and coherence-5 where $\U_\F$, that collects all the possible partial information we may acquire on the occurrence of an event generated from $\F$, is replaced by the set of atoms $\C_\F$. 

In detail, a probability assessment $P:\E \to [0,1]$ is:
\begin{description}
\item[\bf Coherent-1P:] if there exists a finitely additive probability $P':\A_\E \to [0,1]$ such that, for every $E \in \E$, it holds
$$
P(E) = P'(E).
$$
\item[\bf Coherent-2P:] if there exists a finitely additive probability $\mu:\AA_\E \to [0,1]$ such that the mapping $A \mapsto \mu(\tilde{A})$, for every $A \in \A_\E$, is a finitely additive probability and, for every $E \in \E$,
$$
P(E) =  \int_{\U_\E} \left(\min_{\omega \in B}\1_E(\omega)\right) \mu(\dd B).
$$
\item[\bf Coherent-3P:] if, for every $n \in \NN$ and every $\F = \{E_1,\ldots,E_n\} \subseteq \E$, the following linear system with unknowns $x_C$, for every $C \in \C_\F$, is compatible
$$
\S_\F:
\left\{
\begin{array}{ll}
\displaystyle{\sum_{C \in \C_\F} \left(\min_{\omega \in C}\1_{E_i}(\omega)\right) x_C = P(E_i)}, & \mbox{for $i=1,\ldots,n$},\\[1ex]
\displaystyle{\sum_{C \in \C_\F} x_C = 1},\\
x_C \ge 0, &\mbox{for every $C \in \C_\F$}.
\end{array}
\right.
$$
\item[\bf Coherent-4P:] if, for every $n \in \NN$ and every $\F = \{E_1,\ldots,E_n\} \subseteq \E$, for every $\lambda_1,\ldots,\lambda_n \in \RR$, the function $G_\F:\C_\F \to \RR$ defined, for every $C \in \C_\F$, as
$$
G_\F(C) = \sum_{i = 1}^n \lambda_i\left(\left(\min_{\omega \in C} \1_{E_i}(\omega)\right) - P(E_i)\right),
$$
is such that $
\displaystyle{\max_{C \in \C_\F} G_\F(C)\ge 0}$.
\item[\bf Coherent-5P:] if, for every $n \in \NN$ and every $\F = \{E_1,\ldots,E_n\} \subseteq \E$, there is no distinct assessment $P^*:\F \to [0,1]$ such that the functions $L_\F^*,L_\F:\C_\F \to \RR$ defined, for every $C \in \C_\F$, as
\begin{eqnarray*}
L_\F^*(C) &=& \sum_{i = 1}^n \left(\left(\min_{\omega \in C} \1_{E_i}(\omega)\right) - P^*(E_i)\right)^2,\\
L_\F(C) &=& \sum_{i = 1}^n \left(\left(\min_{\omega \in C} \1_{E_i}(\omega)\right) - P(E_i)\right)^2,
\end{eqnarray*}
satisfy, for every $C \in \C_\F$, $L_\F^*(C) < L_\F(C)$.
\end{description}

Also in the case of a probability assessment we have that the notions of coherence given above are equivalent.

\begin{corollary}
\label{cor:equiv-prob}
For a probability assessment $P:\E \to [0,1]$ the following statements are equivalent:
\begin{description}
\item[\it (i)] $P$ is coherent-1P;
\item[\it (ii)] $P$ is coherent-2P;
\item[\it (iii)] $P$ is coherent-3P;
\item[\it (iv)] $P$ is coherent-4P;
\item[\it (v)] $P$ is coherent-5P.
\end{description}
\end{corollary}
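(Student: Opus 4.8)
The plan is to mirror the proof of Theorem~\ref{th:cohe-bel}, exploiting the fact recalled in Section~\ref{sec:preliminaries} that a belief function on a finite algebra is a finitely additive probability exactly when its focal elements are atoms, equivalently when its M\"obius inverse is supported on $\atoms(\A_\F)$. Under this observation each condition coherent-$k$P is precisely the belief condition coherent-$k$ in which the set $\U_\F$ of all non-impossible events is replaced by the set of atoms $\C_\F$. Consequently every step of the proof of Theorem~\ref{th:cohe-bel} carries over once $\U_\F$ is substituted by $\C_\F$ throughout, and I will only single out the points that require attention.

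For {\it (i)} $\Longleftrightarrow$ {\it (ii)} I would again invoke Theorem~A in \cite{gs-mobius}, observing that the integral in coherent-2P equals $\mu(\tilde{E})$, so that the additional requirement imposed there --- that the set function $A \mapsto \mu(\tilde{A})$ be itself finitely additive --- is exactly the statement that the belief function represented by $\mu$ is a finitely additive probability. Thus coherent-2P is coherent-2 restricted to the probabilistic subclass, and the extending probability is directly $A \mapsto \mu(\tilde{A})$, so that it corresponds under the M\"obius bijection to coherent-1P.

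For {\it (i)} $\Longrightarrow$ {\it (iii)}, restricting a finitely additive probability $P'$ on $\A_\E$ extending $P$ to the finite algebra $\A_\F$ gives a probability $P''$ whose M\"obius inverse vanishes off $\C_\F$; setting $x_C = P''(C)$ for each $C \in \C_\F$ solves the probabilistic system $\S_\F$. Conversely, for {\it (iii)} $\Longrightarrow$ {\it (i)} a solution of $\S_\F$ is a probability vector supported on $\C_\F$, hence determines a finitely additive probability $P''$ on $\A_\F$ extending $P_{|\F}$; the finite intersection property argument then yields a global extension, now using that the finitely additive probabilities on $\A_\E$ form a closed subset of the compact space $[0,1]^{\A_\E}$ in the product topology. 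The equivalences {\it (iii)} $\Longleftrightarrow$ {\it (iv)} and {\it (iii)} $\Longleftrightarrow$ {\it (v)} follow from the same Farkas' lemma and Bregman projection arguments, with the coefficient matrix and the vectors ${\bf e}^j$ now indexed by the atoms in $\C_\F$ instead of by all of $\U_\F$.

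The main obstacle I anticipate is the identification underlying {\it (i)} $\Longleftrightarrow$ {\it (ii)} for an infinite $\Omega$: one has to verify that imposing additivity of $A \mapsto \mu(\tilde{A})$ selects exactly those $\mu$ whose induced belief function is a probability, and dually that the closedness invoked in {\it (iii)} $\Longrightarrow$ {\it (i)} persists within the probabilistic subclass. Once these are in place, the corollary is just the probabilistic restriction of Theorem~\ref{th:cohe-bel}, and the resulting conditions coincide with de~Finetti's classical notions of coherence.
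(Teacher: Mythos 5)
Your proposal is correct, but it reaches the result by a different route than the paper. The paper's own proof is essentially two citations: the equivalence of \emph{(i)}, \emph{(iii)}, \emph{(iv)}, \emph{(v)} is attributed directly to de~Finetti, since with $\U_\F$ replaced by $\C_\F$ these are exactly his classical coherence conditions, while \emph{(i)} $\Longleftrightarrow$ \emph{(ii)} is dispatched as an immediate consequence of Theorem~A together with Lemma~4.1.2 of \cite{gs-mobius}, the latter being the result that identifies which M\"obius inverses $\mu$ on $\AA_\E$ induce additive set functions. You instead re-derive everything by specializing the proof of Theorem~\ref{th:cohe-bel}: since the M\"obius inverse of a probability on a finite algebra is supported on the atoms, the restriction/extension step, the finite-intersection-property compactness argument, the Farkas' lemma computation, and the Bregman-projection argument all carry over verbatim with $\C_\F$ in place of $\U_\F$. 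This is sound, and the two points you flag as obstacles are in fact unproblematic: the integral in coherent-2P equals $\mu(\tilde{E})$, so requiring $A \mapsto \mu(\tilde{A})$ to be finitely additive is precisely requiring the belief function induced by $\mu$ under the Theorem~A bijection to be a probability (this is what the paper outsources to Lemma~4.1.2), and closedness of the probabilistic analogue of $\BB_\F$ holds because additivity on the finite algebra $\A_\F$ amounts to finitely many pointwise equality constraints. What your approach buys is a self-contained verification that the machinery of Theorem~\ref{th:cohe-bel} genuinely restricts to the probabilistic subclass; what the paper's approach buys is brevity and the conceptual point that the probabilistic case is not a new theorem at all but literally de~Finetti's classical result.
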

\begin{proof}
The equivalence between {\it (i)}, {\it (iii)}, {\it (iv)}, and {\it (v)} is due to de~Finetti \cite{definetti}. The equivalence between {\it (i)} and {\it (ii)} is an immediate consequence of Theorem~A and Lemma~4.1.2 in \cite{gs-mobius}.
\hfill$\square$
\end{proof}

The above notions of coherence for a probability assessment imply the corresponding notions for a belief assessment. In particular, in view of Corollary~\ref{cor:equiv-prob}, we say that a probability assessment $P:\E \to [0,1]$ is {\it coherent-P} if one (and hence all) of the previous notions of coherence holds, otherwise it is said {\it incoherent-P}.

\subsection{Finitely minitive necessity measures}
If $\A$ is an algebra of substes of $\Omega$, a {\it finitely minitive necessity measure} \cite{dubprad} is a mapping $N:\A \to [0,1]$ such that:
\begin{description}
\item[\it (i)] $N(\emptyset) = 0$ and $N(\Omega) = 1$;
\item[\it (ii)] $N(A \cap B) = \min\{N(A),N(B)\}$, for every $A,B \in \A$.
\end{description}
It turns out that every finitely minitive necessity measure $N$ is a belief function, indeed (see, e.g., \cite{shafer} and \cite{grabisch}), for every $k\geq 2$ and every $A_1,\ldots,A_k \in \A$, the restriction of $N$ to $\algebra(\{A_1,\ldots,A_k\})$ is such that
$$
N \left(\bigcup_{i=1}^k A_i\right) \geq \sum_{\emptyset \neq I \subseteq \{1,\ldots,k\}} (-1)^{|I| + 1} N\left(\bigcap_{i \in I} A_i\right).
$$

In the original work by Shafer \cite{shafer}, limiting to a finite $\Omega$ and $\A = \wp(\Omega)$, a (finitely minitive) necessity measure is called {\it consonant belief function}. In particular, in such a finite setting, this translates to a M\"obius inverse with nested focal elements (see, e.g., \cite{dubprad-when,grabisch,shafer}).

Given a finite non-empty set of events $\F$,
if the set of atoms generated by $\F$ is
$\C_\F = \atoms(\algebra(\F)) = \{C_1,\ldots,C_m\}$, denote by $\chains(\U_\F)$ the collection of subfamilies of $\U_\F$ such that $\D_\F = \{D_1,\ldots,D_m\} \in \chains(\U_\F)$ if and only if $D_1 = C_{i_1}$, $D_2 = C_{i_1} \cup C_{i_2}, \ldots, D_m = C_{i_1} \cup \cdots \cup C_{i_m} = \Omega$. Notice that the elements of $\chains(\U_\F)$ are in one-to-one correspondence with permutations of $\C_\F$.

Finitely minitive necessity theory can be considered as a special case of Dempster-Shafer theory. 
In practice, this has a direct impact on conditions coherence-3, coherence-4 and coherence-5 where $\U_\F$, that collects all the possible partial information we may acquire on the occurrence of an event generated from $\F$, is replaced by a family $\D_\F \in \chains(\U_\F)$. In other terms, here we assume partially resolving uncertainty but we require consonance, i.e., we focus on nested sets of events generated from the events in $\F$.

In detail, a necessity assessment $N:\E \to [0,1]$ is:
\begin{description}
\item[\bf Coherent-1N:] if there exists a finitely minitive necessity measure $N':\A_\E \to [0,1]$ such that, for every $E \in \E$, it holds
$$
N(E) = N'(E).
$$
\item[\bf Coherent-2N:] if there exists a finitely additive probability $\mu:\AA_\E \to [0,1]$ such that the mapping $A \mapsto \mu(\tilde{A})$, for every $A \in \A_\E$, is a finitely minitive necessity measure and, for every $E \in \E$,
$$
N(E) =  \int_{\U_\E} \left(\min_{\omega \in B}\1_E(\omega)\right) \mu(\dd B).
$$
\item[\bf Coherent-3N:] if, for every $n \in \NN$ and every $\F = \{E_1,\ldots,E_n\} \subseteq \E$, there exists $\D_\F \in \chains(\U_\F)$ such that the following linear system with unknowns $x_D$, for every $D \in \D_\F$, is compatible
$$
\S_\F:
\left\{
\begin{array}{ll}
\displaystyle{\sum_{D \in \D_\F} \left(\min_{\omega \in D}\1_{E_i}(\omega)\right) x_D = N(E_i)}, & \mbox{for $i=1,\ldots,n$},\\[1ex]
\displaystyle{\sum_{D \in \D_\F} x_D = 1},\\
x_D \ge 0, &\mbox{for every $D \in \D_\F$}.
\end{array}
\right.
$$
\item[\bf Coherent-4N:] if, for every $n \in \NN$ and every $\F = \{E_1,\ldots,E_n\} \subseteq \E$, there exists $\D_\F \in \chains(\U_\F)$ such that, for every $\lambda_1,\ldots,\lambda_n \in \RR$, the function $G_\F:\D_\F \to \RR$ defined, for every $D \in \D_\F$, as
$$
G_\F(D) = \sum_{i = 1}^n \lambda_i\left(\left(\min_{\omega \in D} \1_{E_i}(\omega)\right) - N(E_i)\right),
$$
is such that $
\displaystyle{\max_{D \in \D_\F} G_\F(D)\ge 0}$.
\item[\bf Coherent-5N:] if, for every $n \in \NN$ and every $\F = \{E_1,\ldots,E_n\} \subseteq \E$, there exists $\D_\F \in \chains(\U_\F)$ such that there is no distinct assessment $N^*:\F \to [0,1]$ such that the functions $L_\F^*,L_\F:\D_\F \to \RR$ defined, for every $D \in \D_\F$, as
\begin{eqnarray*}
L_\F^*(D) &=& \sum_{i = 1}^n \left(\left(\min_{\omega \in D} \1_{E_i}(\omega)\right) - N^*(E_i)\right)^2,\\
L_\F(D) &=& \sum_{i = 1}^n \left(\left(\min_{\omega \in D} \1_{E_i}(\omega)\right) - N(E_i)\right)^2,
\end{eqnarray*}
satisfy, for every $D \in \D_\F$, $L_\F^*(D) < L_\F(D)$.
\end{description}

We notice that a conditional version of condition coherence-1N has been introduced in \cite{cv-ijar2007}. As far we know, all the other conditions are new in the literature.

Also in the case of a necessity assessment we have that the notions of coherence given above are equivalent.

\begin{corollary}
\label{cor:equiv-nec}
For a necessity assessment $N:\E \to [0,1]$ the following statements are equivalent:
\begin{description}
\item[\it (i)] $N$ is coherent-1N;
\item[\it (ii)] $N$ is coherent-2N;
\item[\it (iii)] $N$ is coherent-3N;
\item[\it (iv)] $N$ is coherent-4N;
\item[\it (v)] $N$ is coherent-5N.
\end{description}
\end{corollary}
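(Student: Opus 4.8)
The plan is to establish each of the five equivalences by specializing the corresponding argument in the proof of Theorem~\ref{th:cohe-bel}, systematically replacing the set $\U_\F$ of all non-empty events by a chain $\D_\F \in \chains(\U_\F)$ and carrying the existential quantifier ``there exists $\D_\F \in \chains(\U_\F)$'' through each step. The conceptual bridge throughout is the classical fact recalled in Section~\ref{sec:specialcases}: a belief function on a finite algebra is a finitely minitive necessity measure precisely when its focal elements are nested, and any such nest can be completed to a maximal chain in $\chains(\U_\F)$ by inserting the missing unions of atoms with zero mass.

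For {\it (i)} $\Longleftrightarrow$ {\it (ii)} I would argue exactly as in Corollary~\ref{cor:equiv-prob}: by Theorem~A in \cite{gs-mobius} a necessity measure $N'$ on $\A_\E$ is in bijection with its M\"obius inverse $\mu$ on $\AA_\E$, and the extra requirement that $A \mapsto \mu(\tilde A)$ be a necessity measure is precisely the translation of consonance of $N'$ into the language of $\mu$; the integral representation is then the one of Definition coherent-2 restricted to the necessity subclass.

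For {\it (i)} $\Longleftrightarrow$ {\it (iii)}, the forward direction uses that the restriction $N''$ of $N'$ to any finite $\A_\F$ is again finitely minitive, hence consonant; its focal elements are nested, so they sit inside a maximal chain $\D_\F \in \chains(\U_\F)$, and setting $x_D = m_{N''}(D)$ (with $x_D = 0$ on the padding elements) solves $\S_\F$. Conversely, a non-negative solution of $\S_\F$ supported on a chain $\D_\F$ is the M\"obius inverse of a belief function with nested focal elements, i.e.\ a necessity measure on $\A_\F$ extending $N_{|\F}$. To pass from these local necessity measures to a global $N'$ on $\A_\E$ I would reuse the compactness argument of Theorem~\ref{th:cohe-bel}: the set $\NNN_\F$ of mappings $\A_\E \to [0,1]$ whose restriction to $\A_\F$ is a necessity measure extending $N_{|\F}$ is a non-empty subset of the compact space $[0,1]^{\A_\E}$, and it is closed because the condition $N'(A \cap B) = \min\{N'(A),N'(B)\}$ is preserved under pointwise limits. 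Since the restriction of a necessity measure to a sub-algebra is still a necessity measure, one has $\NNN_{\F_1 \cup \cdots \cup \F_k} \subseteq \NNN_{\F_1} \cap \cdots \cap \NNN_{\F_k}$, so the family $\{\NNN_\F\}$ has the finite intersection property and its total intersection contains the desired global necessity measure.

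Finally, {\it (iii)} $\Longleftrightarrow$ {\it (iv)} and {\it (iii)} $\Longleftrightarrow$ {\it (v)} follow chain by chain. For a fixed chain $\D_\F$, solvability of $\S_\F$ over $\D_\F$ is equivalent, by the very Farkas' lemma computation of Theorem~\ref{th:cohe-bel}, to the non-existence of $\lambda_1,\ldots,\lambda_n$ with $G_\F(D) < 0$ for all $D \in \D_\F$; and it is equivalent, by the same Bregman/moment-of-inertia projection argument (the squared Euclidean distance being the Bregman divergence of the Brier score), to the non-existence of a distinct $N^*$ strictly dominating $N$ in penalty on all of $\D_\F$. Prepending the existential quantifier over $\D_\F \in \chains(\U_\F)$ to each side turns these fixed-chain equivalences into exactly {\it (iii)} $\Longleftrightarrow$ {\it (iv)} and {\it (iii)} $\Longleftrightarrow$ {\it (v)}. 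I expect the main obstacle to be the bookkeeping in {\it (i)} $\Longleftrightarrow$ {\it (iii)}: one must verify carefully that consonance corresponds to being supported on a maximal chain (not merely on a nest) and that the finite intersection property survives when the witnessing chain is allowed to vary with $\F$ --- both of which hinge on the stability of the class of necessity measures under restriction to sub-algebras.
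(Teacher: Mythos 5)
Your proposal is correct and takes essentially the same route as the paper's own proof: \emph{(i)}$\Leftrightarrow$\emph{(ii)} via Theorem~A (and Lemma~4.1.2) of \cite{gs-mobius}; \emph{(i)}$\Leftrightarrow$\emph{(iii)} via the nestedness of the focal elements of a consonant belief function (the paper invokes Theorem~7.38 of \cite{grabisch}) combined with the same closed-subsets-of-$[0,1]^{\A_\E}$ finite-intersection-property argument; and \emph{(iii)}$\Leftrightarrow$\emph{(iv)},\emph{(v)} by rerunning the Farkas and projection arguments of Theorem~\ref{th:cohe-bel} chain by chain and prepending the existential quantifier over $\chains(\U_\F)$. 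The points you flag as delicate (completing a nest of focal elements to a maximal chain with zero masses, and stability of necessity measures under restriction to subalgebras, which yields the finite intersection property) are precisely the ones the paper relies on, so there is no gap.
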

\begin{proof}
{\it (i)} $\Longleftrightarrow$ {\it (ii)} The equivalence follows by Theorem~A and Lemma~4.1.2 in \cite{gs-mobius}.

{\it (i)} $\Longrightarrow$ {\it (iii)} If $N':\A_\E \to [0,1]$ is a finitely minitive necessity measure extending $N$, then, for every $n \in \NN$ and every $\F = \{E_1,\ldots,E_n\} \subseteq \E$, its restriction to $\A_\F$ is a necessity measure $N''$ having M\"obius inverse $m_{N''}$. Thus, by Theorem~7.38 in \cite{grabisch} there exists $\D_\F \in \chains(\U_\F)$ such that setting $x_D = m_{N''}(D)$, for every $D \in \D_\F$, we get a solution of the corresponding system $\S_\F$.

{\it (iii)} $\Longrightarrow$ {\it (i)} Suppose, for every $n \in \NN$ and every $\F = \{E_1,\ldots,E_n\} \subseteq \E$, there exists $\D_\F \in \chains(\U_\F)$ such that the system $\S_\F$ is compatible. Then, if ${\bf x}$ is a solution with component $x_D$, for every $D \in \D_\F$, setting $m_{N''}(D) = x_D$, for every $D \in \D_\F$, and $m_{N''}(A) = 0$, for every $A \in \A_\F \setminus \D_\F$, we get the M\"obius inverse of a necessity measure $N''$ extending $N_{|\F}$ on the whole $\A_\F$. Notice that, by Theorem~7.38 in \cite{grabisch}, every necessity measure $N''$ on $\A_\F$ extending $N_{|\F}$ is obtained for a $\D_\F \in \chains(\U_\F)$ through a solution of the corresponding system $\S_\F$. 
Denote by $\NNN_\F$ the set of mappings from $\A_\E$ to $[0,1]$ whose restriction to $\A_\F$ is a necessity measure extending $N_{|\F}$, where the restriction is determined by a $\D_\F \in \chains(\U_\F)$ through a solution of the corresponding $\S_\F$. The set $\NNN_\F$ is a non-empty \red{closed} subset of the \red{compact} set $[0,1]^{\A_\E}$ endowed with the product topology. It is easily seen that the family
$$
\{\NNN_\F \,:\, \F = \{E_1,\ldots,E_n\} \subseteq \E,n \in \NN\},
$$
possesses the finite intersection property, thus it holds
$$
\bigcap\{\NNN_\F \,:\, \F = \{E_1,\ldots,E_n\} \subseteq \E,n \in \NN\} \neq \emptyset
$$
and so there exists $N' \in \bigcap\{\NNN_\F \,:\, \F = \{E_1,\ldots,E_n\} \subseteq \E,n \in \NN\}$ which is a finitely minitive necessity measure on $\A_\E$ extending $N$.

{\it (iii)} $\Longleftrightarrow$ {\it (iv)} For $n \in \NN$ and $\F = \{E_1,\ldots,E_n\} \subseteq \E$, suppose there exists $\D_\F \in \chains(\U_\F)$ such that the corresponding system $\S_\F$ is compatible. Then, following the same steps of the proof of Theorem~\ref{th:cohe-bel} and working with $\D_\F$ in place of $\U_\F$, we have that solvability of $\S_\F$ is equivalent to $\max\limits_{D \in \D_F}G_\F(D) \ge 0$.

{\it (iii)} $\Longleftrightarrow$ {\it (v)} For $n \in \NN$ and $\F = \{E_1,\ldots,E_n\} \subseteq \E$, suppose there exists $\D_\F \in \chains(\U_\F)$ such that the corresponding system $\S_\F$ is compatible. Then, following the same steps of the proof of Theorem~\ref{th:cohe-bel} and working with $\D_\F$ in place of $\U_\F$, we have that solvability of $\S_\F$ is equivalent to the non-existence of a distinct assessment $N^*:\F \to [0,1]$ such that $L^*_\F(D) < L_\F(D)$, for every $D \in \D_\F$.
\hfill$\square$
\end{proof}

The above notions of coherence for a necessity assessment imply the corresponding notions for a belief assessment. In particular, in view of Corollary~\ref{cor:equiv-nec}, we say that a necessity assessment $N:\E \to [0,1]$ is {\it coherent-N} if one (and hence all) of the previous notions of coherence holds, otherwise it is said {\it incoherent-N}.

\section{Proper scoring rules and  correction of an incoherent assessment}
\label{sec:properscoring}
The notions of coherence-5 (respectively, coherence-5P and coherence-5N) can be generalized recurring to a general proper scoring rule \cite{scoring-rules,ssk-scoring,ssk-imprecise}.

A function $s:\{0,1\} \times [0,1] \to [0,+\infty]$ is said a {\it (strictly) proper scoring rule} if it satisfies:
\begin{description}
\item[\it (i)] $ps(1,x)+(1-p)s(0,x)$ is uniquely minimized at $x = p$, for every $p \in [0,1]$;
\item[\it (ii)] $s(i,x)$ is continuous in the second variable, for every $i \in \{0,1\}$.
\end{description}

A proper scoring rule $s$ is {\it bounded} if $s(i,x)$ is bounded in the second variable, for every $i \in \{0,1\}$.

Two popular choices are the Brier quadratic scoring rule $s_B(i,x) = (i - x)^2$ \cite{Brier}, which is bounded, and the logarithmic scoring rule $s_L(i,x) = -\ln|1-i-x|$ \cite{Good}, which is unbounded.

\begin{definition}
Let $s$ be a fixed proper scoring rule.
A belief assessment $Bel:\E \to [0,1]$ is {\bf coherent-5($s$)} if, for every $n \in \NN$ and every $\F = \{E_1,\ldots,E_n\} \subseteq \E$, there is no distinct assessment $Bel^*:\F \to [0,1]$ such that the functions $L_\F^*,L_\F:\U_\F \to \RR$ defined, for every $B \in \U_\F$, as
\begin{eqnarray*}
L_\F^*(B) &=& \sum_{i = 1}^n s\left(\left(\min_{\omega \in B} \1_{E_i}(\omega)\right),Bel^*(E_i)\right),\\
L_\F(B) &=& \sum_{i = 1}^n s\left(\left(\min_{\omega \in B} \1_{E_i}(\omega)\right),Bel(E_i)\right),
\end{eqnarray*}
satisfy, for every $B \in \U_\F$, $L_\F^*(B) < L_\F(B)$.
\end{definition}
Notice that condition coherence-5 coincides with coherence-5($s_B$) obtained using the Brier quadratic scoring rule $s_B$.

Every bounded proper scoring rule $s$ can be used to define a {\it Bregman divergence} between vectors ${\bf u} = (u_1,\ldots,u_n)^T, {\bf v} = (v_1,\ldots,v_n)^T \in [0,1]^{n \times 1}$ (see, e.g., \cite{cz-bregman,scoring-rules}). At this aim, first $s$ is extended to a function on $[0,1] \times [0,1]$ setting, for every $p,x \in [0,1]$
$$
s(p,x) = ps(1,x)+(1-p)s(0,x).
$$
Then we define (see \cite{gs-sanfilippo}), for every ${\bf u}, {\bf v} \in [0,1]^{n \times 1}$,
$$
d_{s}({\bf u},{\bf v}) = \sum_{i=1}^n s(u_i,v_i) - \sum_{i=1}^n s(u_i,u_i).
$$

In particular, the Bregman divergence corresponding to the Brier quadratic scoring rule $d_{s_B}$ coincides with the squared Euclidean distance, i.e., $d_{s_B}({\bf u},{\bf v}) = d_E({\bf u},{\bf v})^2$.

The notions of coherence-5P and coherence-5N generalize accordingly to {\it coherence-5P($s$)} and {\it coherence-5N($s$)}, substituting the Brier quadratic scoring rule with a general proper scoring rule.

\begin{theorem}
Let $s$ be a fixed proper scoring rule.
The following statements hold:
\begin{description}
\item[\it (i)] a belief assessment $Bel:\E \to [0,1]$ is coherent-3 (and so coherent) if and only if it is coherent-5($s$);
\item[\it (ii)] a probability assessment $P:\E \to [0,1]$ is coherent-3P (and so coherent-P) if and only if it is coherent-5P($s$);
\item[\it (iii)] a necessity assessment $N:\E \to [0,1]$ is coherent-3N (and so coherent-N) if and only if it is coherent-5N($s$).
\end{description}
\end{theorem}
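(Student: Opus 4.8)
The plan is to reuse the geometric set-up from the proof of Theorem~\ref{th:cohe-bel}. Fix $n \in \NN$ and $\F = \{E_1,\ldots,E_n\} \subseteq \E$, enumerate $\U_\F = \{A_1,\ldots,A_{2^m-1}\}$ with $m = |\atoms(\A_\F)|$, and associate to each $A_j$ the vertex ${\bf e}^j = (\min_{\omega \in A_j}\1_{E_1}(\omega),\ldots,\min_{\omega \in A_j}\1_{E_n}(\omega))^T$, together with ${\bf d} = (Bel(E_1),\ldots,Bel(E_n))^T$ and, for a competing assessment $Bel^*$, ${\bf d}^* = (Bel^*(E_1),\ldots,Bel^*(E_n))^T$. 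That proof already gives coherence-3 $\Leftrightarrow {\bf d} \in \conv(\{{\bf e}^1,\ldots,{\bf e}^{2^m-1}\})$, so it suffices to show that this membership is equivalent to coherence-5($s$). The bridge is a \emph{self-score identity}: with $s$ extended by $s(p,x) = ps(1,x)+(1-p)s(0,x)$ and $d_s({\bf u},{\bf v}) = \sum_i s(u_i,v_i) - \sum_i s(u_i,u_i)$, one has $L_\F(A_j) = d_s({\bf e}^j,{\bf d}) + c_j$ and $L_\F^*(A_j) = d_s({\bf e}^j,{\bf d}^*) + c_j$, where $c_j = \sum_i s(e^j_i,e^j_i)$ depends only on $A_j$. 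The constant $c_j$ cancels in the comparison, so coherence-5($s$) amounts to: there is no distinct ${\bf d}^*$ with $d_s({\bf e}^j,{\bf d}^*) < d_s({\bf e}^j,{\bf d})$ for every $j$.

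For coherence-3 $\Rightarrow$ coherence-5($s$) I would argue straight from propriety, which keeps the step valid for every proper scoring rule (bounded or not). Writing ${\bf d} = \sum_j x_j {\bf e}^j$ with $x_j \ge 0$ and $\sum_j x_j = 1$ and reading $x$ as a distribution on $\U_\F$, one has coordinatewise $\sum_j x_j e^j_i = d_i$, so $\sum_j x_j s(e^j_i,v_i) = s(d_i,v_i) \ge s(d_i,d_i)$, strictly whenever $v_i \neq d_i$. Summing over $i$ gives $\sum_j x_j L_\F^*(A_j) > \sum_j x_j L_\F(A_j)$ for every ${\bf d}^* \neq {\bf d}$, forcing $L_\F^*(A_j) > L_\F(A_j)$ at some vertex; hence no $Bel^*$ dominates uniformly and coherence-5($s$) holds. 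Equivalently, this is the Bregman three-point identity $\sum_j x_j d_s({\bf e}^j,{\bf d}^*) - \sum_j x_j d_s({\bf e}^j,{\bf d}) = d_s({\bf d},{\bf d}^*) > 0$.

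For the converse I would use the projection property of $d_s$. If ${\bf d} \notin \conv(\{{\bf e}^1,\ldots,{\bf e}^{2^m-1}\})$, set ${\bf d}^* = \arg\min_{{\bf u} \in \conv(\{{\bf e}^1,\ldots,{\bf e}^{2^m-1}\})} d_s({\bf u},{\bf d})$, the Bregman projection of ${\bf d}$ onto the hull; it lies in $[0,1]^n$, so it is a legitimate assessment, and it is distinct from ${\bf d}$. The generalized Pythagorean inequality of Proposition~3 in \cite{scoring-rules} gives, for each vertex, $d_s({\bf e}^j,{\bf d}^*) + d_s({\bf d}^*,{\bf d}) \le d_s({\bf e}^j,{\bf d})$; since $d_s({\bf d}^*,{\bf d}) > 0$ this yields $d_s({\bf e}^j,{\bf d}^*) < d_s({\bf e}^j,{\bf d})$, i.e.\ $L_\F^*(A_j) < L_\F(A_j)$, for all $j$, so coherence-5($s$) fails. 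This settles part {\it (i)}.

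Parts {\it (ii)} and {\it (iii)} follow by running the same argument on the relevant sub-collection of vertices. For a probability assessment one replaces $\U_\F$ by the atoms $\C_\F$ and proceeds verbatim with ${\bf e}^C$, $C \in \C_\F$. For a necessity assessment the equivalence is established chain by chain: for each fixed $\D_\F \in \chains(\U_\F)$, compatibility of the corresponding system $\S_\F$ is equivalent to ${\bf d} \in \conv(\{{\bf e}^D : D \in \D_\F\})$, which by the above is equivalent to the absence of an $N^*$ dominating $N$ over $\D_\F$; the existential quantifier over $\D_\F$ then passes identically to both conditions. I expect the converse to be the main obstacle: one must guarantee the existence of the Bregman projection and its Pythagorean inequality for the given $d_s$, and verify the self-score cancellation. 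The delicate case is an unbounded scoring rule such as the logarithmic one, where some penalties may be $+\infty$ and one must ensure that Proposition~3 still produces a uniformly dominating assessment; by contrast the forward implication, resting only on propriety, is unproblematic throughout.
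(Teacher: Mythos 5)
Your proof takes a different route from the paper's, and the route is sound in one direction but not in the other. The paper does not redo any scoring-rule analysis: it uses Theorem~\ref{th:cohe-bel} to identify coherence-3 with the existence of a M\"obius inverse $\mu$ on $\wp(\U_\F)$ satisfying $\mu(\tilde{E}_i^\F) = Bel(E_i)$, observes that $\chi_{\tilde{E}_i^\F}(B) = \min_{\omega\in B}\1_{E_i}(\omega)$, so that $L_\F$ and $L_\F^*$ are literally the de~Finetti penalties of the probability assessment $\mu(\tilde{E}_1^\F),\ldots,\mu(\tilde{E}_n^\F)$ on the sample space $\U_\F$, and then quotes Theorem~1 and Corollary~1 of \cite{scoring-rules} as a black box, in both directions. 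You instead inline that black box via the convex-hull characterization. Your forward direction (coherence-3 implies coherence-5($s$)), resting on the self-score identity, linearity of the extended score in its first argument, and strict propriety, is correct and — a genuine plus — valid for every proper scoring rule, bounded or not: whenever $x_j>0$ the terms $s(e^j_i,d_i)$ are automatically finite, so the weighted-sum comparison is legitimate. Parts {\it (ii)} and {\it (iii)} are also organized correctly (atoms for probabilities; chain-by-chain transfer of the existential quantifier for necessities, matching Corollary~\ref{cor:equiv-nec}).

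The gap is in the converse, exactly where you flagged it, and it is real rather than cosmetic. The theorem is stated for an arbitrary proper scoring rule, but the Bregman divergence $d_s$ and the Pythagorean inequality of Proposition~3 in \cite{scoring-rules} are set up — in this paper and in that reference — only for \emph{bounded} $s$. For an unbounded rule your projection recipe can fail outright. Take $\Omega=\{\omega_1,\omega_2\}$, $\F=\{E_1,E_2\}$ with $E_1=\{\omega_1\}$, $E_2=\{\omega_2\}$, the assessment $Bel(E_1)=Bel(E_2)=1$, and the logarithmic rule $s_L$. Then ${\bf d}=(1,1)$ lies outside $\conv(\{{\bf e}^1,{\bf e}^2,{\bf e}^3\})$ with ${\bf e}^1=(1,0)$, ${\bf e}^2=(0,1)$, ${\bf e}^3=(0,0)$, but $d_{s_L}({\bf u},{\bf d})=+\infty$ for \emph{every} ${\bf u}$ in the hull (at least one coordinate of ${\bf u}$ is $<1$), so the argmin defining the projection is the whole hull and carries no information, the Pythagorean inequality degenerates to $\infty\le\infty$, and the subtraction step "$d_s({\bf d}^*,{\bf d})>0$ hence $d_s({\bf e}^j,{\bf d}^*)<d_s({\bf e}^j,{\bf d})$" is invalid. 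Concretely, the candidate ${\bf d}^*={\bf e}^1$ does not dominate ${\bf d}$, since $L_\F^*(A_2)=s_L(0,1)+s_L(1,0)=+\infty=L_\F(A_2)$; a dominating assessment does exist (e.g.\ the barycenter $(1/3,1/3)$, all of whose penalties are finite while $L_\F\equiv+\infty$), so the statement is not in doubt, but your construction does not produce it. As written, your argument therefore proves the theorem only for bounded $s$; to reach the stated generality you must either import the unbounded-case analysis from \cite{scoring-rules} — which is precisely what the paper's wholesale citation of Theorem~1 and Corollary~1 there accomplishes — or supply a separate argument handling infinite divergences and penalties.
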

\begin{proof}
We only focus on statement {\it (i)} since the other two statements can be proved analogously.
For $n \in \NN$ and $\F = \{E_1,\ldots,E_n\} \subseteq \E$, the equivalence between coherence-3 and coherence-5($s$) follows by Theorem~1 and Corollary~1 in \cite{scoring-rules}.
Indeed, by our Theorem~\ref{th:cohe-bel}, coherence-3 is equivalent to the existence of a belief function $Bel''$ on $\A_\F$ extending the restriction of $Bel$ to $\F$. Such $Bel''$ is completely characterized by its M\"obius inverse $m_{Bel''}$ on $\A_\F$ that gives rise to a probability measure $\mu$ on $\wp(\U_\F)$. Defining $\tilde{E}_i^{\F} = \{B \in \U_\F \,:\, B \subseteq E_i\}$, for $i=1,\ldots,n$, we have that $Bel(E_i) = \mu(\tilde{E}_i^{\F})$. 
So, the coherence of the belief assessment $Bel(E_1),\ldots,Bel(E_n)$ is equivalent to the coherence (in the classical de Finetti's sense) of the probability assessment
$\mu(\tilde{E}_1^\F),\ldots,\mu(\tilde{E}_n^\F)$. Further, for every $\G \in \wp(\U_\F)$, denoting by $\chi_\G:\U_\F \to \{0,1\}$ the corresponding indicator, we have that
$$
\chi_{\tilde{E}_i^\F}(B) = \min_{\omega\in B} \1_{E_i}(\omega).
$$
Hence, setting $\mu^*(\tilde{E}_i^\F) = Bel^*(E_i)$, for $i=1,\ldots,n$, the functions $L^*_\F,L_\F$ in condition coherence-5($s$) reduce to
$$
L_\F^*(B) = \sum_{i=1}^n s(\chi_{\tilde{E}_i^\F}(B),\mu^*(\tilde{E}_i^\F))
\quad
\mbox{and}
\quad
L_\F(B) = \sum_{i=1}^n s(\chi_{\tilde{E}_i^\F}(B),\mu(\tilde{E}_i^\F)).
$$
Finally, by Theorem~1 and Corollary~1 in \cite{scoring-rules}, the probability assessment $\mu(\tilde{E}_1^\F),\ldots,\mu(\tilde{E}_n^\F)$ is coherent if and only if there is no distinct assessment $\mu^*(\tilde{E}_1^\F),\ldots,\mu^*(\tilde{E}_n^\F)$ such that $L_\F^*(B) < L_\F(B)$, for every $B \in \U_\F$.
\hfill$\square$
\end{proof}

An assessment $Bel:\E \to [0,1]$ can be regarded either inside the wider framework of coherent belief assessments or inside the narrower frameworks of coherent-P or coherent-N assessments. We have that incoherence implies both incoherence-P and incoherence-N, but incoherence-P does not imply incoherence as well as incoherence-N does not imply incoherence.

Focusing on a finite $\E = \{E_1,\ldots,E_n\}$, condition coherence-5($s$) (respectively, coherence-5P($s$) and coherence-5N($s$)) can be used to correct an incoherent (respectively, incoherent-5P($s$) and incoherent-5N($s$)) assessment.
Let $\A_\E = \algebra(\E)$, $\U_\E = \A_\E \setminus \{\emptyset\}$ and $\C_\E = \atoms(\A_\E) = \{C_1,\ldots,C_m\}$. Fix an enumeration of $\U_\E = \{A_1,\ldots,A_{2^m - 1}\}$, let ${\bf e}^1,\ldots,{\bf e}^{2^m-1} \in \RR^{n \times 1}$ be defined as in the proof of Theorem~\ref{th:cohe-bel}, moreover, let ${\bf d} = (Bel(E_1),\ldots,Bel(E_n))^T \in \RR^{n \times 1}$.

As proved in Theorem~\ref{th:cohe-bel}, $Bel$ is incoherent if and only if ${\bf d}$ does not belong to the convex hull of ${\bf e}^1,\ldots,{\bf e}^{2^m-1}$, in symbol ${\bf d} \notin \conv(\{{\bf e}^1,\ldots,{\bf e}^{2^m-1}\})$. Hence, fixed a Bregman divergence $d_s$ determined by a bounded proper scoring rule $s$, by Proposition~3 in \cite{scoring-rules} we can correct the assessment finding the projection of ${\bf d}$ onto $\conv(\{{\bf e}^1,\ldots,{\bf e}^{2^m-1}\})$, that is
$${\bf d}^* = \displaystyle{\arg\min_{{\bf u} \in \conv(\{{\bf e}^1,\ldots,{\bf e}^{2^m-1}\})} d_s({\bf u},{\bf d})}.
$$

The assessment is incoherent-P if and only if ${\bf d}$ does not belong to the convex hull of those ${\bf e}^j$'s corresponding to elements of $\C_\E$. If it is so, then the correction inside the subclass of coherent-P probability assessments ${\bf d}^{**}$ is found as the projection of ${\bf d}$ onto the convex hull of those ${\bf e}^j$'s corresponding to elements of $\C_\E$.

Finally, the assessment is incoherent-N if and only if ${\bf d}$ does not belong to the convex hull of those ${\bf e}^j$'s corresponding to elements of $\D_\E$, for any $\D_\E \in \chains(\U_\E)$. In this case, to find the correction inside the subclass of coherent-N necessity assessments, for each $\D_\E \in \chains(\U_\E)$ we need to look for the projection of ${\bf d}$ onto the convex hull of those ${\bf e}^j$'s corresponding to elements of $\D_\E$, say ${\bf d}^{***}_{\D_\E}$, and then select the final correction as 
$$
{\bf d}^{***} \in \arg\min_{{\bf d}^{***}_{\D_\E},\D_\E \in \chains(\U_\E)} d_s({\bf d}^{***}_{\D_\E}, {\bf d}).
$$

\begin{example}
Take $\Omega = \{\omega_1,\omega_2,\omega_3\}$, $A = \{\omega_1,\omega_2\}$ and $B = \{\omega_2,\omega_3\}$, together with the belief assessment $Bel(A) = \frac{1}{4}$, $Bel(B) = 1$ and $Bel(A \cap B) = \frac{1}{2}$. Such assessment is immediately seen to be incoherent (and so also incoherent-P and incoherent-N), since $A \cap B \subseteq A$ but $Bel(A \cap B) = \frac{1}{2} > \frac{1}{4} = Bel(A)$.

In particular, for $\E = \{A,B,A\cap B\}$, since $\A_\E = \wp(\Omega)$, fixing the enumeration of $\U_\E = \{A_1,\ldots,A_7\}$ with
$A_1 = \{\omega_1\}$, $A_2 = \{\omega_2\}$, $A_3 = \{\omega_3\}$, $A_4 = \{\omega_1,\omega_2\}$, $A_5 = \{\omega_1,\omega_3\}$, $A_6 = \{\omega_2,\omega_3\}$ and
$A_7 = \Omega$, we have
\begin{center}
\begin{tabular}{c|ccccccc}
$\U_\E$ & $A_1$ & $A_2$ & $A_3$ & $A_4$ & $A_5$ & $A_6$ & $A_7$\\
\hline
$\min \1_A$ & $1$ & $1$ & $0$ & $1$ & $0$ & $0$ & $0$\\
$\min \1_B$ & $0$ & $1$ & $1$ & $0$ & $0$ & $1$ & $0$\\
$\min \1_{A\cap B}$ & $0$ & $1$ & $0$ & $0$ & $0$ & $0$ & $0$
\end{tabular}
\end{center}
whose columns are the vectors ${\bf e}^1,\ldots,{\bf e}^7$.

Denoting ${\bf d} = (Bel(A),Bel(B),Bel(A \cap B))^T$, the incoherence of the assessment above is due to the fact that ${\bf d} \notin \conv(\{{\bf e}^1,\ldots,{\bf e}^7\})$. In particular, since ${\bf e}^1 = {\bf e}^4$, ${\bf e}^3 = {\bf e}^6$ and ${\bf e}^5 = {\bf e}^7$, it holds $\conv(\{{\bf e}^1,\ldots,{\bf e}^7\}) = \conv(\{{\bf a}^1,{\bf a}^2,{\bf a}^3,{\bf a}^4\})$ where ${\bf a}^1 = \alpha {\bf e}^1 + (1-\alpha) {\bf e}^4$, ${\bf a}^2 = {\bf e}^2$,
${\bf a}^3 = \beta {\bf e}^3 + (1-\beta) {\bf e}^6$,
${\bf a}^4 = \gamma {\bf e}^5 + (1-\gamma) {\bf e}^7$, with $\alpha,\beta,\gamma \in [0,1]$. Using the Bregman divergence related to the Brier quadratic scoring rule denoted as $d_{s_B}$, the projection of ${\bf d}$ onto $\conv(\{{\bf a}^1,{\bf a}^2,{\bf a}^3,{\bf a}^4\})$ is
\begin{eqnarray*}
{\bf d}^* &=& 0\cdot {\bf a}^1 + \frac{3}{8} \cdot {\bf a}^2 + \frac{5}{8} \cdot {\bf a}^3 + 0 \cdot {\bf a}^4\\
&=&
0 \cdot {\bf e}^1 + \frac{3}{8} \cdot {\bf e}^2 + \lambda \cdot {\bf e}^3 + 0 \cdot {\bf e}^4 + 0 \cdot {\bf e}^5 + \left(\frac{5}{8} - \lambda\right) \cdot {\bf e}^6 +  0 \cdot {\bf e}^7,
\end{eqnarray*}
where $\lambda \in \left[0,\frac{5}{8}\right]$. This implies that there is an entire class of belief functions on $\A_\E$ providing the same correction of the assessment on $\E$, whose M\"obius inverse on $\U_\E$ is given by the weights of the last convex combination. Explicitly, we have
\begin{center}
\begin{tabular}{c|cccccccc}
$\A_\E$ & $\emptyset$ & $\{\omega_1\}$ & $\{\omega_2\}$ & $\{\omega_3\}$ & $\{\omega_1,\omega_2\}$ & $\{\omega_1,\omega_3\}$ & $\{\omega_2,\omega_3\}$ & $\Omega$\\
\hline
$m_{Bel^*}$ & $0$ & $0$ & $\frac{3}{8}$ & $\lambda$ & $0$ & $0$ & $\frac{5}{8}-\lambda$ & $0$\\
$Bel^*$ & $0$ & $0$ & $\frac{3}{8}$ & $\lambda$ & $\frac{3}{8}$ & $\lambda$ & $1$ & $1$\\
\end{tabular}
\end{center}
so, independently of $\lambda \in \left[0,\frac{5}{8}\right]$, we get the correction $Bel^*(A) = \frac{3}{8}$, $Bel^*(B) = 1$ and $Bel^*(A \cap B) = \frac{3}{8}$, for which $d_{s_B}({\bf d}^*,{\bf d}) = \frac{1}{32}$.

If we consider the incoherent-P probability assessment $P = Bel$, then we can correct it by referring to the subclass of coherent-P probability assessments. Noticing that the vectors ${\bf e}^1,{\bf e}^2,{\bf e}^3$ correspond to $\atoms(\A_\E)$, this can be done looking for the projection of ${\bf d}$ onto $\conv(\{{\bf e}^1,{\bf e}^2,{\bf e}^3\})$ which is
$$
{\bf d}^{**} = 0 \cdot {\bf e}^1 + \frac{3}{8} \cdot {\bf e}^2 + \frac{5}{8} \cdot {\bf e}^3.
$$
The weights of such convex combination are the restriction to $\atoms(\A_\E)$ of a probability measure $P^*$ on $\A_\E$ correcting the assessment $P$. Such $P^*$ reveals to be a particular element of the class of belief functions correcting the initial assessment, obtained for $\lambda = \frac{5}{8}$.

Finally, if we consider the incoherent-N necessity assessment $N = Bel$, then we can correct it by referring to the subclass of coherent-N necessity assessments. To do so, we need to consider all the permutations of $\atoms(\A_\E)$, each of them giving rise to an element of $\chains(\U_\E)$. For each $\D_\E \in \chains(\U_\E)$ we need to look for the projection of ${\bf d}$ onto the convex hull of those ${\bf e}^j$'s corresponding to elements in $\D_\E$, say ${\bf d}^{***}_{\D_\E}$, and then select the final correction as
$$
{\bf d}^{***} \in \arg\min_{{\bf d}^{***}_{\D_\E},\D_\E \in \chains(\U_\E)} d_{s_B}({\bf d}^{***}_{\D_\E}, {\bf d}).
$$
We have that the minimum is achieved in correspondence of
$\D_\E = \{A_2,A_6,A_7\}$ and it holds
$$
{\bf d}^{***} = \frac{3}{8} \cdot {\bf e}^2 + \frac{5}{8} \cdot {\bf e}^6 + 0 \cdot {\bf e}^7,
$$
whose coefficients give rise (by setting to $0$ the missing weights) to a M\"obius inverse on $\A_\E$, which is in bijection with a necessity measure $N^*$ on $\A_\E$ correcting the assessment $N$. Such $N^*$ reveals to be a particular element of the class of belief functions correcting the initial assessment, obtained for $\lambda = 0$.

Let us stress that the correction we get depends on the particular Bregman divergence (that is on the particular bounded proper scoring rule) we choose.
\hfill$\blacklozenge$
\end{example}

In the literature, there are several studies using divergences and distances with the purpose of correcting an incoherent assessment or approximating a coherent assessment with another belonging to a different framework.
For instance, concerning correction, in \cite{capot} the authors propose a discrepancy measure for the correction of an incoherent conditional probability assessment. Such measure is linked to the logarithmic Bregman divergence (see \cite{gs-sanfilippo}) determined by the logarithmic scoring rule $s_L$, which is an unbounded proper scoring rule. While, concerning approximation, in \cite{mmv} the authors cope with the outer approximation of a coherent lower probability with a belief function. Such task is achieved by minimizing some suitable distances defined on the set of lower probabilities.

\section{Conclusions}
This contribution aims at celebrating our \red{esteemed} colleague Teddy Seidenfeld in the occasion of his 70th birthday.
We pay homage to him by presenting new results connected to some topics he faced in his brilliant career, in particular, scoring rules \cite{ssk-scoring,seidenfeld,ssk-imprecise} and belief functions \cite{seidenfeld2,ws-jspi}.

We introduce various notions of coherence for a partial assessment in the Dempster-Shafer theory of evidence. In detail, we present a betting scheme condition and a penalty criterion condition that mimic de Finetti's conditions for probabilistic assessments \cite{definetti}. The latter two conditions rely on the partially resolving uncertainty principle introduced by Jaffray \cite{Jaffray-Bel}. Such conditions can be considered as operational tools to assess subjective beliefs.
We prove that all the introduced notions of coherence are equivalent to the consistency with a belief function defined on an algebra.

Then we focus on two distinguished subclasses of belief functions, given by finitely additive probability measures and finitely minitive necessity measures. For both cases, we specialize the introduced coherence notions and equivalence results.

Finally, in each of the three analyzed frameworks, we provide a generalized coherence condition based on a (strictly) proper scoring rule, possibly departing from the classical Brier quadratic scoring rule. We show the equivalence of such condition with the other coherence conditions and we face the problem of correcting an incoherent assessment using the Bregman divergence determined by a bounded proper scoring rule.


\begin{acknowledgements}
The authors are members of the INdAM-GNAMPA research group. The second author was supported by Università degli Studi di Perugia, Fondo Ricerca di Base 2019, project ``Modelli per le decisioni economiche e finanziarie in condizioni di ambiguità ed imprecisione''.
\end{acknowledgements}

%
%

\bibliographystyle{plain}
\bibliography{biblio.bib}   


\end{document}